\documentclass[11pt]{article}

\usepackage[T1]{fontenc}
\usepackage{lmodern}
\usepackage[a4paper,margin=29mm]{geometry}
\usepackage{amsmath,amssymb,amsthm}
\usepackage{microtype}
\usepackage[colorlinks=true,linkcolor=blue,citecolor=blue,urlcolor=blue]{hyperref}
\newtheorem{theorem}{Theorem}[section]
\newtheorem{lemma}[theorem]{Lemma}
\newtheorem{proposition}[theorem]{Proposition}

\title{A note on the logarithmic exponent of Ramsey number \texorpdfstring{$r(4,t)$}{r(4,t)}}
\author{
Kaizhe Chen\thanks{Department of Mathematics, Princeton University, Princeton, USA. Email: kc8556@princeton.edu}
}
\date{October 9, 2026}

\begin{document}
\maketitle

\begin{abstract}
In this short note, we point out that the recent lower bounds for off-diagonal Ramsey numbers $r(s,t)$ with fixed $s\ge 5$, together with the recent proof of the Ajtai--Erd\H{o}s--Koml\'os--Szemer\'edi independence conjecture, imply
\[
  r(4,t)=\frac{t^3}{(\log t)^{2+o(1)}}.
\]
This note is solely for informal circulation and will not be submitted for publication.
\end{abstract}

\section{Introduction}

For integers $s,t\ge2$, let $r(s,t)$ be the least integer $N$ such that every graph on $N$ vertices contains either a clique of size $s$ or an independent set of size $t$. We consider the off-diagonal regime, in which $s$ is fixed and $t\to\infty$. For more on Ramsey theory and recent developments, we refer the reader to the surveys of Conlon, Fox, and Sudakov~\cite{CFS} and Verstra\"ete~\cite{VerstraeteSurvey}.

In 1980, Ajtai, Koml\'os, and Szemer\'edi~\cite{AKS} proved $r(s,t)=O_s(t^{s-1}/(\log t)^{s-2})$ for any fixed $s\ge3$.
Pseudorandom graphs provide another approach to off-diagonal Ramsey numbers; see Mubayi and Verstra\"ete~\cite{MubayiVerstraete}.
Using a finite-geometric construction, Mattheus and Verstra\"ete~\cite{MattheusVerstraete} established
\[
  r(4,t)\ge \Omega\!\left(\frac{t^3}{(\log t)^4}\right),
\]
leaving a gap of two in the logarithmic exponent. Recently, Brada\v{c}~\cite{Bradac} proved that, for every fixed integer $s\ge 3$,
\[
  r(s,t)\ge \Omega\left(\frac{t^{s-1}}{(\log t)^{2s-4}}\right),
\]
which determines the polynomial exponent of $r(s,t)$ for every fixed $s$. The recent preprints~\cite{OpenAI5,OpenAIhigh} determine the logarithmic exponent of $r(s,t)$ for every fixed $s\ge5$.

\begin{lemma}[{\cite{OpenAI5}}]\label{five}
As $t\to\infty$,
\begin{equation*}
  r(5,t)\ge\frac{t^4}{(\log t)^{3+o(1)}}.
\end{equation*}
\end{lemma}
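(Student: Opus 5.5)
This lemma is imported from the preprint~\cite{OpenAI5}; here is how I would reconstruct a proof. The approach is to run the random-greedy (triangle-free-process style) analysis on top of a pseudorandom algebraic host graph, in the way Mattheus--Verstra\"ete and Brada\v{c} do. The target is a $K_5$-free graph $G$ on $N=t^4/(\log t)^{3+o(1)}$ vertices with $\alpha(G)<t$.

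\textbf{Step 1: the host.} I would take an algebraic graph $H$ on $n$ vertices that is a union of many cliques (``blocks''), each of size about $m$, with these properties:
\begin{itemize}
\item any two blocks meet in at most one vertex;
\item every $K_5$ in the union of blocks can be destroyed by a local choice inside the blocks;
\item independent sets of $H$ have strong spectral and expansion control.
\end{itemize}
The natural candidate is a Hermitian-unital-type or generalized-quadrangle-type incidence structure in dimension adapted to $s=5$, playing the role that the Hermitian unital plays for $s=4$.

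\textbf{Step 2: random subgraphs inside blocks.} Inside each block I would place an independent random $K_4$-free-type graph, for example a random bipartite or random triangle-free graph. The choice must ensure that any $K_5$ in the union would need edges from at least two blocks. Together with the block geometry (no short block-cycles), this makes the union $K_5$-free.

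\textbf{Step 3: bounding independent sets.} For a set $I$ of size $t$, I would split its trace over the blocks.
\begin{itemize}
\item \emph{Heavy blocks} (those meeting $I$ in at least about $\log t$ vertices): the probability that $I$ is independent inside each such block is roughly $\exp(-c|I\cap B|^2/m)$ per block.
\item \emph{Counting:} a spectral (expander-mixing) bound on $H$ shows that $\sum_B |I\cap B|^2$ is large for every $t$-set, at least $t^2 n^{-1}\cdot(\text{degree})$ up to error.
\item \emph{Union bound:} the number of choices of $I$ is at most $\binom{n}{t}\le e^{t\log n}$, so the union bound closes once $\sum_B |I\cap B|^2/m\gg t\log n$.
\end{itemize}
Optimizing $n$ and $m$ should give $n\approx t^4/(\log t)^{3}$, since each block contributes only one logarithmic loss beyond the trivial one.

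\textbf{Main obstacle.} Brada\v{c}'s analysis loses $(\log t)^{2s-4}=(\log t)^6$, so the difficulty is saving three logarithms. The saving has to come from a sharper probability estimate for independent sets in the block graphs. Instead of a crude product bound, I would use a container or nibble-type count of independent sets in the random block graph, together with a refined treatment of light blocks via Janson's inequality. If that first attempt fails, I would fall back on replacing the plain random graphs inside blocks with a random greedy $K_4$-free process analysed by differential equations.
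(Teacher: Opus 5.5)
Your proposal is a research plan rather than a proof, and it stops exactly where the content of the lemma lies. By your own account, the framework of Steps 1--3 is the one behind Brada\v{c}'s $(\log t)^{2s-4}=(\log t)^6$ bound: an algebraic host, random graphs inside blocks, and expander mixing plus a union bound over the $\binom{n}{t}$ candidate sets. You also say that saving three logarithms ``has to come from a sharper probability estimate.'' That estimate is never supplied. You only list tools you might try: containers, nibble, Janson, or a differential-equations analysis of a $K_4$-free process if those fail. So the claim that optimizing $n$ and $m$ gives $n\approx t^4/(\log t)^3$ is unsupported.

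The earlier steps are not established either. No concrete host is given, and $K_5$-freeness is asserted from unspecified ``block geometry.'' For instance, take triangle-free block graphs with blocks meeting in at most one vertex. Then a $K_5$ in the union must have its ten edges in ten distinct blocks. You would need a host with no such configuration, the $s=5$ analogue of O'Nan-freeness, and you do not exhibit one. Your per-block bound $\exp(-c|I\cap B|^2/m)$ also does not fit the block graphs you suggest. For a complete bipartite graph on a uniformly random bipartition of $B$, a $k$-subset is independent with probability $2^{1-k}$.

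The paper does not construct anything for this lemma. It is quoted from the literature, and the paper observes that it also follows in a few lines from Lemma~\ref{high} with $s=6$ via Proposition~\ref{prop}. Take a $K_6$-free $G$ on $n=r(6,t)-1$ vertices with $\alpha(G)<t$ and average degree $d$. Caro--Wei gives $d+1>n/t$, and since $n/t^2\to\infty$ this forces $d\ge t$. The Ajtai--Erd\H{o}s--Koml\'os--Szemer\'edi bound (Lemma~\ref{lem:shearer}) then gives $t>c_6 n\log d/d\ge c_6 n\log t/d$, that is, $d>c_6 n\log t/t$. The neighbourhood of a vertex of maximum degree is $K_5$-free with independence number less than $t$. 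Hence $r(5,t)>d\ge c_6\frac{\log t}{t}(r(6,t)-1)\ge t^4/(\log t)^{3+o(1)}$.

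This neighbourhood argument trades one factor of $t$ for one factor of $\log t$ at only constant cost, and it is the idea your proposal is missing. Without it, or a genuinely new independent-set estimate for a concrete $K_5$-free construction, nothing in your outline reaches exponent $3$.
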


\begin{lemma}[{\cite{OpenAIhigh}}]\label{high}
For every fixed integer $s\ge6$, as $t\to\infty$,
\begin{equation*}
  r(s,t)\ge\frac{t^{s-1}}{(\log t)^{s-2+o(1)}}.
\end{equation*}
\end{lemma}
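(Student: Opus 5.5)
Since Lemma~\ref{high} is quoted from~\cite{OpenAIhigh}, we only indicate how we would try to prove it ourselves. The plan follows the template of Mattheus--Verstra\"ete~\cite{MattheusVerstraete} and Brada\v{c}~\cite{Bradac}. First, we fix a structured host graph $\Gamma$ on $N$ vertices. It should be an edge-disjoint union of cliques (coming from a finite-geometric or algebraic incidence structure) in which every copy of $K_s$ is forced to meet few cliques in a controlled way. Second, we pass to a random subgraph $G\subseteq\Gamma$ that destroys all copies of $K_s$ while keeping the independence number at most $t$.

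The difference from~\cite{Bradac} lies in the random step. Brada\v{c}'s argument independently sparsifies each clique, for instance by a random bipartition or random subgraph. That loses one factor of $\log t$ per ``level'', giving the exponent $2s-4$. To get exponent $s-2$, we would instead run a $K_s$-free process or a nibble on the $K_s$-hypergraph of $\Gamma$, in the spirit of Bohman--Keevash. Alternatively, we would take a random $K_{s-1}$-free-within-each-clique pattern whose density inside each clique is close to optimal, namely edge density $p\approx N^{-2/(s+1)}$ up to polylogarithmic factors. We then tune $N$, the clique size $q$, and the density so that $N\approx t^{s-1}/(\log t)^{s-2+o(1)}$. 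The parameters must also make the expected number of surviving $K_s$'s negligible, or removable by alteration at a cost of $(\log t)^{o(1)}$.

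The key steps, in order, are as follows.
\begin{enumerate}
\item Construct $\Gamma$ and verify its pseudorandomness. Every set $S$ of size $t$ should span about $|S|^2 q/N$ edges, and should meet the cliques of $\Gamma$ in a well-spread way. This is an eigenvalue or expander-mixing estimate.
\item Prove that the random process keeps the graph $K_s$-free. We would also track its degree and codegree statistics with trajectories, so that every clique of $\Gamma$ ends up with a near-optimal $K_s$-free random graph inside it.
\item Bound $\alpha(G)<t$. This is a union bound over $t$-sets, organised by the hypergraph container method, so that only $\exp(O(t\log t))$ ``fingerprints'' need to be controlled rather than $\binom{N}{t}$ sets.
\end{enumerate}

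We expect the third step to be the main obstacle. For a fixed $t$-set $S$, we need the probability that $S$ is independent to be at most $\exp(-t(\log t)^{1+\Omega(1)})$. Here the process dependence is the difficulty: the final density is dictated by the $K_s$-free process, not by independent sampling. The first thing we would try is to split $S$ according to how it meets the cliques of $\Gamma$. For the ``spread'' part, we would use the Bohman--Keevash style statement that the process keeps each fixed large vertex set's open pairs evolving like a random set. For the ``concentrated'' part, where $S$ meets a few cliques in many vertices, we would use the known bound $\alpha\approx t$ for the $K_s$-free process on a single clique, which is exactly where the exponent $s-2$ comes from. The remaining technical work is to make the container fingerprints compatible with the process filtration, so that the $o(1)$ loss in the logarithmic exponent is all that is incurred.
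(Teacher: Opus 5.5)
The note contains no proof of this lemma. It is imported from \cite{OpenAIhigh}, and Proposition~\ref{prop} only transfers bounds from $s$ down to $s-1$, so nothing in the paper yields it. Judged on its own, your text is a research programme, not a proof. The three steps that carry all the content are asserted rather than established:
\begin{itemize}
\item You never exhibit, for $s\ge 6$, a clique-decomposed host $\Gamma$ whose copies of $K_s$ are forced into a configuration that a within-clique random pattern can destroy. This is the role played for $s=4$ by the O'Nan-configuration-free Hermitian unital in \cite{MattheusVerstraete}.
\item The Bohman--Keevash trajectory analysis concerns the $K_s$-free process on $K_N$, where it only gives $r(s,t)\ge t^{(s+1)/2-o(1)}$. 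Nothing in your sketch shows it can be run on such a structured host.
\item You explicitly leave the bound $\alpha(G)<t$ open.
\end{itemize}

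Several of the quantitative targets in the sketch are also wrong. Any $K_s$-free $G$ on $N\ge t^{s-1}/(\log t)^{s-2+o(1)}$ vertices with $\alpha(G)<t$ has maximum degree below $r(s-1,t)=O(t^{s-2}/(\log t)^{s-3})$ (apply \cite{AKS} to a neighbourhood). Its average degree exceeds $c_sN\log t/t$ by Lemma~\ref{lem:shearer}, exactly as in the proof of Proposition~\ref{prop}. So its edge density is forced to be $(\log t)^{1+o(1)}/t$.
\begin{itemize}
\item The density $p\approx N^{-2/(s+1)}=t^{-2(s-1)/(s+1)+o(1)}\le t^{-10/7+o(1)}$ that you propose for the pattern inside each clique bounds the overall density, and it is polynomially too small. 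Caro--Wei then gives $\alpha(G)\ge t^{10/7-o(1)}$, and no polylogarithmic correction can repair this.
\item $\binom{N}{t}$ is already $\exp((s-2+o(1))t\log t)$, so reducing to $\exp(O(t\log t))$ fingerprints gains nothing. What matters is the constant in front of $t\log t$.
\item The per-set bound $\exp(-t(\log t)^{1+\Omega(1)})$ you ask for is out of reach. At the forced density a $t$-set spans on average only $\mu\le t(\log t)^{1+o(1)}$ edges, and with independent edges $\Pr[S\text{ independent}]=\prod_e(1-p_e)\approx e^{-\mu}$. The realistic target is $\exp(-Ct\log t)$ for a suitable constant $C>s-2$.
\end{itemize}
Finally, the same computation shows that in any graph witnessing the lemma, the neighbourhood of a maximum-degree vertex is a $K_{s-1}$-free graph on $t^{s-2}/(\log t)^{s-3+o(1)}$ vertices with independence number below $t$. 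That is, it is a near-extremal $(s-1,t)$-Ramsey graph. A proof along your lines must therefore produce near-optimal structure at every level simultaneously, and your sketch does not say where that comes from.
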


For a graph $G$, we write $\alpha(G)$ for its independence number.
The recent preprint~\cite{OpenAIColoring} proves the Alon--Krivelevich--Sudakov coloring conjecture, stating that for every fixed graph $F$, every $F$-free graph $G$ with sufficiently large maximum degree $\Delta$ has chromatic number at most $O_F(\frac{\Delta}{\log\Delta})$.
In particular, this implies the Ajtai--Erd\H{o}s--Koml\'os--Szemer\'edi independence conjecture, as stated below.

\begin{lemma}[\cite{AIIndependence}]
\label{lem:shearer}
For any fixed integer $s\ge 4$, there exists a constant $c_s>0$ such that every $K_s$-free graph $G$ on $n$ vertices with average degree $d\ge 2$ satisfies
\[ \alpha(G)\ge\frac{c_sn\log d}{d}. \]
\end{lemma}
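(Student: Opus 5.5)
The plan is to deduce the statement from the Alon--Krivelevich--Sudakov coloring bound of~\cite{OpenAIColoring}, applied with $F=K_s$, together with the trivial inequality $\alpha(H)\ge |V(H)|/\chi(H)$. The coloring bound is phrased in terms of \emph{maximum} degree, whereas the statement involves \emph{average} degree. So the first step is the standard reduction: discard high-degree vertices. Let $G$ be $K_s$-free on $n$ vertices with average degree $d\ge 2$. Since $\sum_v \deg(v)=nd$, by Markov's inequality fewer than $n/2$ vertices have degree exceeding $2d$. Let $H$ be the subgraph induced on the remaining vertices. Then $|V(H)|\ge n/2$, $H$ is $K_s$-free, $\Delta(H)\le 2d$, and $\alpha(G)\ge\alpha(H)$.

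Next I apply the coloring theorem. It gives constants $C=C_s$ and $\Delta_0=\Delta_0(s)$ such that every $K_s$-free graph with maximum degree $\Delta\ge\Delta_0$ has chromatic number at most $C\Delta/\log\Delta$. The issue is that $\Delta(H)$ might be smaller than $2d$, or smaller than $\Delta_0$. To handle this, set $D=\max(\lfloor 2d\rfloor,\Delta_0)$ and let $H'$ be the disjoint union of $H$ with a star $K_{1,D}$. A star contains no triangle, so $H'$ is still $K_s$-free. Moreover $\Delta(H')=D\ge\Delta_0$ and $\chi(H')=\chi(H)$ once $\chi(H)\ge 2$; the edgeless case is trivial anyway. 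Hence $\chi(H)\le CD/\log D$, and therefore
\[
\alpha(G)\ge \alpha(H)\ge \frac{|V(H)|}{\chi(H)}\ge \frac{n\log D}{2CD}.
\]

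Finally, I split into cases according to the size of $d$. If $2d\ge\Delta_0+1$ (taking $\Delta_0\ge 3$ without loss of generality), then $D=\lfloor 2d\rfloor\in[d,2d]$. This gives $\log D/D\ge \log d/(2d)$, so $\alpha(G)\ge n\log d/(4Cd)$. If instead $2\le d<(\Delta_0+1)/2$, then $d$ is bounded by a constant depending only on $s$, so $\log d\le \log\Delta_0$. In this case I simply use the greedy bound $\alpha(H)\ge |V(H)|/(\Delta(H)+1)\ge n/(2(2d+1))\ge n/(5d)$, which is at least $c\,n\log d/d$ for $c\le 1/(5\log\Delta_0)$. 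Taking $c_s$ to be the minimum of the two constants finishes the proof.

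I do not expect a serious obstacle here, since all the depth is contained in~\cite{OpenAIColoring}. The only points needing care are the following.
\begin{itemize}
\item The coloring theorem requires sufficiently large maximum degree; the disjoint-star padding (or, alternatively, the small-$d$ case split) handles this.
\item Passing from average to maximum degree costs only constant factors, because $\log x/x$ is decreasing for $x\ge 3$ and $\log(2d)\asymp\log d$ for $d\ge2$.
\end{itemize}
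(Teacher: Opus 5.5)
Your proof is correct and follows the route the paper indicates. The paper gives no proof of its own: it cites \cite{AIIndependence} and remarks that the lemma follows from the Alon--Krivelevich--Sudakov coloring theorem of \cite{OpenAIColoring}, and you supply exactly that deduction (Markov truncation to maximum degree at most $2d$, the bound $\alpha\ge |V(H)|/\chi(H)$, and the star padding plus small-$d$ case split to meet the ``sufficiently large $\Delta$'' hypothesis).
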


We observe that combining these results determines the logarithmic exponent of $r(4,t)$.

\begin{theorem}\label{thm:main}
As $t\to\infty$,
\begin{equation*}
  r(4,t)=\frac{t^3}{(\log t)^{2+o(1)}}.
\end{equation*}
\end{theorem}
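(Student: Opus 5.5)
The upper bound is already known: the Ajtai--Koml\'os--Szemer\'edi bound $r(s,t)=O_s(t^{s-1}/(\log t)^{s-2})$ with $s=4$ gives $r(4,t)=O(t^3/(\log t)^2)$, which is at most $t^3/(\log t)^{2+o(1)}$. So all the work is in the lower bound $r(4,t)\ge t^3/(\log t)^{2+o(1)}$. I will get it from Lemma~\ref{five} through a neighbourhood argument. Lemma~\ref{lem:shearer} is used with $s=5$ to force a vertex of large degree.

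\textbf{Setup.} By Lemma~\ref{five}, for large $t$ there is a graph $G$ on $N=t^4/(\log t)^{3+o(1)}$ vertices with no $K_5$ and $\alpha(G)<t$. Let $d$ be its average degree; we may assume $d\ge 2$, since otherwise $\alpha(G)\ge N/3>t$. Take a vertex $v$ of degree at least $d$, and let $H=G[N(v)]$. Then $H$ is $K_4$-free, since a $K_4$ in $H$ together with $v$ would form a $K_5$. Also $\alpha(H)\le\alpha(G)<t$. Hence $r(4,t)>|N(v)|\ge d$, and it suffices to show $d\ge t^3/(\log t)^{2+o(1)}$.

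\textbf{Key step.} Lemma~\ref{lem:shearer} with $s=5$ gives
\[
t>\alpha(G)\ge \frac{c_5N\log d}{d},
\qquad\text{so}\qquad
d> \frac{c_5N\log d}{t}.
\]
First I will bootstrap $\log d$. Using only $\log d\ge\log 2$ gives $d\ge c N/t=t^{3-o(1)}$, so $\log d\ge(2-o(1))\log t$, and in particular $\log d\ge\log t$ for large $t$. Substituting back,
\[
d\ge \frac{c_5 N\log t}{t}=\frac{t^3}{(\log t)^{2+o(1)}},
\]
where the constant $c_5$ and the $o(1)$ in the exponent of $N$ are absorbed into the $(\log t)^{o(1)}$ factor. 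Combining this with $r(4,t)>d$ gives the lower bound, and together with the AKS upper bound this proves Theorem~\ref{thm:main}.

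\textbf{Main obstacle.} This is really just bookkeeping. The one point needing care is that the logarithmic gain from Lemma~\ref{lem:shearer} appears as $\log d$ rather than $\log t$. The bootstrap above handles this, because $d$ is automatically polynomial in $t$. The essential input is that Lemma~\ref{lem:shearer} applies to $K_5$-free graphs: this is what recovers exactly one logarithmic factor when passing from $r(5,t)$ to $r(4,t)$, taking the exponent $3$ down to $2$.
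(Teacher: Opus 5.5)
Your proof is correct and follows the paper's route. The paper packages the same neighbourhood-plus-Lemma~\ref{lem:shearer} argument as a general transfer bound $r(s-1,t)\ge \frac{a_s\log t}{t}\,r(s,t)$, applies it with $s=5$ to Lemma~\ref{five}, and takes the upper bound from Ajtai--Koml\'os--Szemer\'edi; the only difference is that the paper gets $\log d\ge\log t$ from Caro--Wei together with $r(s,t)/t^2\to\infty$, whereas you bootstrap it from Lemma~\ref{lem:shearer} and the explicit size of $N$, which works equally well.
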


\section{A transfer principle}

\begin{proposition}\label{prop}
For any fixed integer $s\ge4$, there exists a constant $a_s>0$ such that, for sufficiently large $t$,
\begin{equation}\label{eq:transfer-output}
  r(s-1,t)\ge \frac{a_s \log t}{t}r(s,t).
\end{equation}
\end{proposition}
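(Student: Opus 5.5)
Let $N=r(s,t)-1$ and fix a $K_s$-free graph $G$ on $N$ vertices with $\alpha(G)<t$. The plan is to pass to a neighbourhood: for each vertex $v$, the subgraph $G[N(v)]$ is $K_{s-1}$-free, since a $K_{s-1}$ there together with $v$ would give a $K_s$ in $G$. It also has independence number less than $t$. Hence every degree satisfies $\deg(v)\le r(s-1,t)-1$. Writing $R=r(s-1,t)$, the average degree $d$ of $G$ therefore satisfies $d<R$.

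Next we apply Lemma~\ref{lem:shearer} to $G$ itself; this is legitimate because $G$ is $K_s$-free and $s\ge4$. If $d\ge2$, this gives
\[
t>\alpha(G)\ge\frac{c_s N\log d}{d}.
\]
The function $x\mapsto \log x/x$ is decreasing for $x\ge e$. So if $e\le d<R$, we can replace $d$ by $R$ and obtain
\[
t>\frac{c_sN\log R}{R},\qquad\text{that is,}\qquad R\log R> c_s N/t .
\]
If instead $d<e$, Turán's bound gives $\alpha(G)\ge N/(d+1)>N/4$. Then $N<4t$, and we are in a trivial regime, since $r(s,t)\ge r(4,t)\gg t^2$ for large $t$ (by the bound of Mattheus and Verstra\"ete, or even the classical $r(3,t)=\Theta(t^2/\log t)$). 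So only the case $d\ge e$ matters.

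It remains to turn $R\log R> c_sN/t$ into the claimed inequality. This step needs $\log R$ to be comparable to $\log t$. Trivially $R\le r(s-1,t)\le t^{s-2}$, so $\log R\le (s-2)\log t$. This yields $R\ge \frac{c_s}{s-2}\cdot\frac{N}{t\log t}$. That is the wrong direction: it gives $r(s-1,t)\gtrsim r(s,t)/(t\log t)$, not the required $r(s,t)\log t/t$.

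So the substitution above is too lossy, and this is the main obstacle. The fix should be to use the fact that the independence bound is applied to neighbourhoods, not to the whole graph. I would instead bound $t$ via the Ajtai--Komlós--Szemerédi-type argument, in which $\alpha(G)\gtrsim \frac{N}{d}\log d$ is compared with the degree bound $d<R$, while separately using that $N/d$ is large.

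Concretely, the key is to track the quantities correctly. Rearranging $t>c_sN\log d/d$ with $d<R$ and $\log d\gtrsim\log t$ gives $N<t d/(c_s\log d)\le tR/(c_s\log t)$, that is,
\[
R\ge \frac{c_s\log t}{t}N.
\]
This is exactly \eqref{eq:transfer-output}; the direction in the previous paragraph was confused. So the genuinely needed ingredient is the lower bound $\log d\ge \delta\log t$ for some fixed $\delta>0$.

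To secure it, I would first pass to a subgraph of large minimum degree, or simply split into cases. If $d\le t^{1/2}$, then $\alpha(G)\ge N/(d+1)$ gives $N\le t^{3/2}+t$, which contradicts $N\ge r(4,t)-1\gg t^{2}$. Hence $d>t^{1/2}$, so $\log d\ge\frac12\log t$. Combining this with $d<R$ and the monotonicity of $x/\log x$ for $x\ge e$ gives
\[
N<\frac{td}{c_s\log d}\le\frac{2tR}{c_s\log t},
\]
and so \eqref{eq:transfer-output} holds with $a_s=c_s/2$ (after absorbing the $-1$ for large $t$).
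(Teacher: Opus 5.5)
Your final argument (the last two paragraphs) is correct and is essentially the paper's proof. You lower-bound the average degree via Caro--Wei/Turán together with $r(s,t)\gg t^2$ (you get $d>t^{1/2}$ where the paper gets $d\ge t$, which only halves your constant), apply Lemma~\ref{lem:shearer} to $G$ itself to get $d\gtrsim N\log t/t$, and compare with $d<r(s-1,t)$ using the $K_{s-1}$-free neighbourhoods. You should delete the earlier detour: $t>c_sN\log R/R$ rearranges to $R>c_sN\log R/t$, not $R\log R>c_sN/t$, and that algebra slip is the only reason that route seemed to go in the wrong direction.
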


\begin{proof}
Let $n=r(s,t)-1$. There exists a $K_s$-free graph $G$ on $n$ vertices with $\alpha(G)<t$.
Write $d$ for its average degree. By the Caro--Wei bound (see e.g. \cite[p.100]{AlonSpencer}), we have $\alpha (G) \ge \frac{n}{d+1}$. Consequently,
\[
  d+1\ge \frac{n}{\alpha (G)}>\frac{n}{t}.
\]
The classical lower bound (see e.g. \cite{Spencer}) implies $\frac{r(s,t)}{t^2}\to\infty$, so $\frac{n}{t^2}\to\infty$, and hence $d\ge t$. Lemma~\ref{lem:shearer} then gives
\[
  t>\alpha(G) \ge \frac{c_sn\log d}{d} \ge \frac{c_sn\log t}{d}.
\]
That is,
\[ 
   d >\frac{c_sn\log t}{t} = \frac{c_s\log t}{t}(r(s,t)-1).
\]
Choose a vertex $v$ of degree at least $d$. The subgraph of $G$ induced by all the neighbors of $v$ is $K_{s-1}$-free and has independence number less than $t$. Hence $r(s-1,t)>d$, completing the proof.
\end{proof}

Applying Proposition~\ref{prop} to either Lemma~\ref{five} or Lemma~\ref{high} gives Theorem~\ref{thm:main}. Moreover, applying Proposition~\ref{prop} to Lemma~\ref{high} yields Lemma~\ref{five}.
Since Proposition~\ref{prop} incurs only a constant loss, removing the $o(1)$ from the logarithmic exponent in the lower bound on $r(s,t)$, for any fixed $s\ge 5$, would also remove it from the corresponding bound on $r(s-1,t)$.

\section*{Acknowledgements}
The author would like to thank Noga Alon for pointing out that the coloring theorem in~\cite{OpenAIColoring} allows us to avoid the $\log\log t$ loss incurred by using Shearer's bound~\cite{Shearer}.

\end{document}